\documentclass{amsart}
\usepackage{bbm}
\usepackage{amsmath,amsthm,amsfonts,amssymb,amscd,mathrsfs}
\usepackage{psfrag,graphicx}

\usepackage{epic,eepic}
\usepackage{color}
\usepackage{ebezier}

\theoremstyle{plain}

\newtheorem{Thm}{Theorem}[section]
\newtheorem{Lem}[Thm]{Lemma}
\newtheorem{Prop}[Thm]{Proposition}
\newtheorem{Cor}[Thm]{Corollary}

\theoremstyle{remark}

\long\def\begcom#1\endcom{}

\newcommand{\length}{\operatorname{\length}}

\newcommand{\Diff}{\operatorname{Diff}}

\def\n{\noindent}

\def\Diff{\operatorname{Diff}}

\def\supp{\operatorname{supp}}

\def\length{\operatorname{length}}

\def\vep{\varepsilon}


\begin{document}

\title[Upper bounds on measure theoretic tail entropy]
      {Upper bounds on measure theoretic tail entropy for dominated splittings}

\author[Y. Cao, G. Liao and Z. You]
{Yongluo Cao$^1$,  Gang Liao$^2$ and  Zhiyuan You$^3$}

\email{ylcao@suda.edu.cn}

\email{lg@suda.edu.cn}

\email{suzhouyou@qq.com}

\thanks{2000 {\it Mathematics Subject Classification}. 37A35, 37D30, 37C40}

\keywords{Measure theoretic tail entropy,  dominated splitting, upper semi-continuity}

\thanks{$^{1}$ Department of Mathematics,  East China Normal University,
	Shanghai 200062, China \&  School of Mathematical Sciences, Center for Dynamical Systems and Differential Equations,  Soochow University,
	Suzhou 215006,  China; $^{2,3}$ School of Mathematical Sciences, Center for Dynamical Systems and Differential Equations,  Soochow University,
	Suzhou 215006,  China. Yongluo Cao was partially supported by NSFC (11771317,  11790274),  Science and Technology Commission
	of Shanghai Municipality (18dz22710000); $^{2}$  Corresponding author. Gang Liao was partially supported by NSFC (11701402,   11790274), BK 20170327 and Jiangsu province ``Double Plan"}


\maketitle

\begin{abstract}
 For differentiable dynamical systems with dominated splittings, we give upper estimates on the measure-theoretic tail entropy in terms of Lyapunov exponents. As our primary application, we verify the upper semi-continuity of metric entropy in various settings with domination.
\end{abstract}

\section{Introduction.}

Let $f$ be a homeomorphism on a compact metric space $M$.   For $K\subset M$, $n\in \mathbb{N}$ and any observable scale $\vep>0$, a subset $K_1 \subset K $ is
called $(n, \vep)$-spanning for $K$ if for any $x\in
K$ there exists $y\in K_1$ such that $ d(f^i(x),f^i(y))\leq
\vep$, $\forall \,i \in[0, n)$.  Let
$r_n(f,K,\varepsilon)$ denote the smallest cardinality of any
$(n,\vep)$-spanning set of $K$. The $\vep$-topological entropy
of $K$ is defined by
$$h(f,K,\varepsilon)=\limsup_{n\rightarrow\infty}\frac{1}{n}\log
r_n(f,K,\varepsilon).$$ 
The topological entropy of $f$ on $K$ is defined by
$$h(f,K)=\lim_{\varepsilon\rightarrow0}h(f,K,\varepsilon).$$
  For $x \in M$, $n\in \mathbb{N}$ and $\vep
> 0 $, let
\begin{eqnarray*}
	B_n(f, x,  \vep)&=& \{ y \in M: d(f^i(x), f^i(y))<\vep,\, |i|<n\},\\[2mm]
	 B_{\infty}(f, x, \vep)&=&\cap_{n\in \mathbb{N}}\, B_n(f, x,  \vep),
\end{eqnarray*}
then the $\vep$-tail entropy at $x$ is defined by  
$$h^{*}(f, x, \varepsilon)=h(f,\, B_{\infty}(f, x, \varepsilon)).$$
Tail entropy has  been studied broadly since the pioneering works of Bowen \cite{Bowen} and Misiurewicz \cite{Mis2} in 1970's, because of its fundamental role in the estimates of entropy in both the  topological and measure theoretic sense.  

Given a compact  $f$-invariant set $\Lambda\subset M$ and $\vep>0$, denote $$h^*(f, \Lambda, \vep)=\sup_{x\in \Lambda}h^{*}(f, x, \vep).$$  We say $f$ on  $\Lambda$ is entropy expansive  \cite{Bowen}   if there exists
$\delta>0$ such that
$h^*(f, \Lambda, \delta)=0$
and asymptotically  entropy expansive 
\cite{Mis2} if
$\lim_{\delta\rightarrow0}h^*(f, \Lambda, \delta)=0.$ 
 Both of these properties imply the upper semi-continuity of metric entropy. 

Denote by $\mathcal{M}_{inv}(f, \Lambda)$ and $\mathcal{M}_{erg}(f, \Lambda)$ the sets of $f$-invariant and ergodic $f$-invariant Borel probability measures  on a compact  $f$-invariant  set $\Lambda\subset M$, respectively.    Consider $\mu\in \mathcal{M}_{erg}(f, \Lambda)$,  then $h^*(f,x, \vep)$ is a constant  for $\mu$-a.e., $x$ (Proposition 2.8 of \cite{CY}), which we denote by  $h^*(f,\mu, \vep)$. In general, when $\mu\in \mathcal{M}_{inv}(f, \Lambda) $, denoting its ergodic decomposition  $\mu=\int_{\mathcal{M}_{erg}(f, \Lambda)}d\tau(m)$, we define the measure theoretic tail entropy of $\mu$ as 
$$h^{*}(f,\mu,\vep)=\int_{\mathcal{M}_{erg}(f, \Lambda)}h^*(f,m, \vep)d\tau(m).$$
 By the tail variational principle \cite{Dows, Burguet}, one has
$$\lim_{\vep\rightarrow0} \,\,\sup_{\mu\in \mathcal{M}_{inv}(f, \Lambda)}h^*(f, \mu,\vep)
=\lim_{\vep\rightarrow0} h^*(f,\Lambda, \vep). $$
However, it is unknown if $ \sup_{\mu\in \mathcal{M}_{inv}(f, \Lambda)}h^*(f, \mu,\vep)
= h^*(f,\Lambda, \vep) $ for any $\vep>0$. 

Tail entropy measures the local dynamical complexity in the process of observation with respect to  the evolutions of dynamical systems.  It is known that uniformly hyperbolic systems are  entropy expansive, and so are all diffeomorphisms away from tangencies \cite{LVY}.  As a more general concept, dominated splitting   exhibiting  uniformly hyperbolic  behavior  on projective bundles,  is admitted by plenty of systems beyond uniform hyperbolic systems \cite{Shub, Mane, BonattiV, BochiV, ACW}.  In the present paper, we attempt to study tail entropy in the setting of dominated splitting.

Let  $\Diff(M)$ be the space of $C^1$ diffeomorphisms on a compact boundaryless Riemannian manifold $M$.   For $f\in \Diff(M)$,   a splitting
$T_{\Lambda}M=E_1\oplus_{<} \cdots\oplus_{<} E_{\ell}$ over a compact  $f$-invariant set $\Lambda\subset M$ is said to be  dominated if there exists $L\in \mathbb{N}$ such that for any $x\in \Lambda$,  $v\in
E_i(x)$,  $w\in E_j(x)$ with $\|v\|=\|w\|=1$ and  $1\le i<j\le \ell$, 
$$\|D_{x}f^L v\|\leq \frac12\|D_{x}f^L w\|.$$
Taking an adapt metric \cite{Nik}, we may assume $L=1$ in the following discussions for dominated splittings.

Recall that the  geometric divergent rate of any $x\in M$ relative to a direction $v\in T_xM$ is given  by the limit
\begin{eqnarray*}\label{limit}\lim_{n\to \infty}\frac{1}{n}\log \|D_xf^nv\|,\end{eqnarray*}
which exists and is called the Lyapunov exponent along $v$, for almost every point $x$ of every  $f$-invariant measure by Oseledets theorem \cite{Oseledec}. For a dominated splitting $T_{\Lambda}M=E_1\oplus_{<} \cdots\oplus_{<} E_{\ell}$ over $\Lambda$, for the purpose of studying  the approximation process of Lyapunov exponents with respect to the evolution time $N$,  we define for any $1\le i\le \ell$, 
\begin{eqnarray*}
\Delta_{f}^{\pm}( x, E_i; N)&=&\lim_{n\rightarrow \pm\infty}\frac{1}{|nN|}\sum_{k=0}^{n-1}\log^+ \|(D_{f^{kN}(x)}f^{\pm N}\mid_{E_i})^{\wedge}\|,\\[2mm]
\Delta_{f}( x, E_i;  N)&=& \min \big{\{} \Delta_{f}^{+}( x, E_i,  N),\,  \Delta_{f}^{-}( x, E_i,  N) \big{\}},
 \\[2mm]\Delta_{f}( x;  N)&=& \min \big{\{} \Delta_{f}( x, E_i,  N):\,  1\le i\le \ell \big{\}}.
\end{eqnarray*}
where $\log^+ t=\max\{0, \log t\}$, and for a linear transformation  $T: X_1 \to X_2$ between  two finite dimensional linear spaces $X_1$ and $X_2$,   $T^{\wedge}$  denotes the map on the exterior algebra
of  $X_1$ (In this manner,  $\|T^{\wedge}\|$ is the maximum  of the absolute values of Jacobians of $T$ on any linear subspace of $X_1$). 
Denote   
$\Delta_{f}^+(x, E_i)$ ($\Delta_{f}^-(x, E_i)$) as the sum of  positive Lyapunov exponents on $E_{i}$ (the sum of the absolute values of negative Lyapunov exponents on $E_{i}$),  then by Oseledets theorem \cite{Oseledec} one could get that for $\mu$-a.e., $x$ of every 
$\mu\in \mathcal{M}_{inv}(f, \Lambda) $, 
\begin{eqnarray*}
 \Delta_{f}(x, E_i; N) &\rightarrow&  \Delta_{f}(x, E_i   ) :=\min \big{\{} \Delta_{f}^{+}(x, E_i),\,  \Delta_{f}^{-}( x, E_i) \big{\}} \quad \text{as}\,\,N\to +\infty,\\[2mm]
 \Delta_{f}(x; N) &\rightarrow&  \Delta_{f}(x):=\min \big{\{} \Delta_{f}(x, E_i):\,  1\le i\le \ell \big{\}} \quad \text{as}\,\,N\to +\infty.
\end{eqnarray*}
 For $\mu\in \mathcal{M}_{inv}(f, \Lambda) $,  let 
\begin{eqnarray*}
	\Delta_{f}(\mu, E_i)&=&\int \Delta_{f}(x, E_i) d\mu(x),\\[2mm]
\Delta_{f}(\mu; N)&=&\int \Delta_{f}(x; N) d\mu(x),\\[2mm]
\Delta_{f}(\mu)&=&\int \Delta_{f}(x) d\mu(x).
\end{eqnarray*}

By analyzing the approximation process of Lyapunov exponents,   we can get the estimates concerning the relationship between the scale of measure theoretic tail entropy and the evolution time.  

\begin{Thm}\label{uppersemi}
	
	Let  $f\in \Diff(M)$ and   
	$T_{\Lambda}M=E_{1} \oplus_{<}  \cdots\oplus_{<} E_{\ell}$ be a dominated splitting  over a compact  $f$-invariant set $\Lambda$,
	then there exists a sequence $\{\vep_N\}_{N\in \mathbb N}$ of positive numbers with $\lim_{N\rightarrow +\infty} \vep_N=0$ such that 
	$$\lim_{N\to +\infty}\, \sup_{\mu\in \mathcal{M}_{inv}(f, \Lambda)} \Big{(}h^*(f,\mu, \vep_N)-\Delta_f(\mu; N)\Big{)} \leq 0.$$ 	
	In particular, it holds that 
	$$\lim_{\vep\to 0} h^*(f,\mu, \vep) \leq \Delta_f(\mu)$$
for any $\mu\in \mathcal{M}_{inv}(f, \Lambda)$.

\end{Thm}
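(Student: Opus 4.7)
The plan is to first reduce to ergodic measures via the ergodic decomposition, since both $h^*(f, \mu, \vep)$ and $\Delta_f(\mu; N)$ are defined by integration against that decomposition. For ergodic $\mu$, each $\Delta_f(x, E_i; N)$ is $\mu$-a.e.~constant and $\Delta_f(\mu; N) = \min_{1 \le i \le \ell} \Delta_f(\mu, E_i; N)$. Thus it suffices to establish, for each fixed index $i$, the bound
\[
\limsup_{N \to \infty}\bigl(h^*(f, \mu, \vep_N) - \Delta_f(\mu, E_i; N)\bigr) \le 0,
\]
with a single sequence $\vep_N \to 0$, and then take the minimum over $i$.

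Fix $i$ and regroup the dominated splitting as $(E_1 \oplus \cdots \oplus E_i) \oplus_{<} (E_{i+1} \oplus \cdots \oplus E_\ell)$ and as $(E_1 \oplus \cdots \oplus E_{i-1}) \oplus_{<} (E_i \oplus \cdots \oplus E_\ell)$. The Hirsch--Pugh--Shub plaque family theorem produces locally $f$-invariant $C^1$ plaque families $W^{cs}_i$ tangent to $E_1 \oplus \cdots \oplus E_i$ and $W^{cu}_{i-1}$ tangent to $E_i \oplus \cdots \oplus E_\ell$. A graph-transform argument exploiting the domination factor $1/2$ (in the adapted metric) shows that at a sufficiently small scale $\vep$ the infinite Bowen ball $B_\infty(f, x, \vep)$ lies in the transverse intersection $W^{cs}_i(x) \cap W^{cu}_{i-1}(x)$, a $C^1$ disk at $x$ tangent to $E_i$ of dimension $\dim E_i$ (with the conventions $W^{cu}_0 = W^{cs}_\ell = M$ at the extremes).

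Having localized $B_\infty$ inside an $E_i$-tangent disk, count $(n, \vep')$-spanning sets for it, with respect to the iterate $g = f^N$, by covering the $g^n$-image of the disk by balls of radius $\vep'$ in the intrinsic plaque metric. Volume comparison yields
\[
r_n\bigl(g, B_\infty(f, x, \vep), \vep'\bigr) \;\le\; C_{\vep, \vep'} \cdot \prod_{k=0}^{n-1} \bigl\|(Df^N|_{E_i})^{\wedge}(f^{kN}(x))\bigr\|,
\]
so dividing by $nN$ and passing to the limsup gives $h^*(f, x, \vep) \le \Delta_f^+(x, E_i; N)$. The complementary bound $h^*(f, x, \vep) \le \Delta_f^-(x, E_i; N)$ follows by applying the same argument to $f^{-1}$ (whose dominated splitting reverses order and whose $\Delta^+$ equals $\Delta^-$ of $f$), using the identity $B_\infty(f^{-1}, x, \vep) = B_\infty(f, x, \vep)$ and the time-reversal symmetry of the measure-theoretic tail entropy. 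Taking the minimum yields $\Delta_f(x, E_i; N)$, and uniformization over a Lusin--Egoroff compact set $\Lambda_{N, \eta}$ of $\mu$-measure at least $1 - \eta$, on which the Birkhoff sums in $\Delta_f^{\pm}(\cdot, E_i; N)$ converge uniformly and the plaque families are uniformly continuous, produces the integrated bound.

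The main obstacle is the coupled calibration of $\vep_N$ and of the uniformization set $\Lambda_{N, \eta}$: $\vep_N$ must be small enough that the plaque containment, the local linearization behind the volume estimate, and the uniform control of the Birkhoff averages on $\Lambda_{N, \eta}$ all hold simultaneously at scale $\vep_N$ for iterates of $f^N$, yet these control constants typically deteriorate with $N$ and must be chosen uniformly across the indices $i$ and the ergodic components of a general $\mu$. A secondary subtlety is justifying the time-reversal symmetry $h^*(f, \mu, \vep) = h^*(f^{-1}, \mu, \vep)$ for the measure-theoretic tail entropy on ergodic components, which proceeds via the measure-theoretic definition rather than the set-theoretic topological entropy on the non-invariant set $B_\infty(f, x, \vep)$.
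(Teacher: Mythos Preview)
Your proposal has a genuine gap at the containment step. You assert that for \emph{every} index $i$, the graph-transform argument from domination alone forces $B_\infty(f,x,\vep)\subset W^{cs}_i(x)\cap W^{cu}_{i-1}(x)$, a disk tangent to $E_i$. This cannot hold for all $i$ simultaneously: if it did, $B_\infty(f,x,\vep)$ would lie in the intersection of pairwise transverse $E_i$-disks and hence reduce to $\{x\}$, so every diffeomorphism with a dominated splitting would be expansive --- which is false for any partially hyperbolic example with nontrivial neutral center. Concretely, for $E^s\oplus_< E^c\oplus_< E^u$ with $E^c$ neutral, $B_\infty(f,x,\vep)$ is a disk tangent to $E^c$ and is not contained in the $E^s$-plaque $W^{cs}_1(x)$; domination of $E^s$ by $E^c\oplus E^u$ only says that the $E^c\oplus E^u$-component of an orbit difference grows \emph{relative} to the $E^s$-component, not in absolute terms, so no escape from the Bowen ball is forced. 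Consequently your volume bound $h^*(f,x,\vep)\le\Delta_f^+(x,E_i;N)$ is unjustified for those $i$ whose plaque does not contain $B_\infty$, and the subsequent minimum over $i$ collapses.

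The paper proceeds differently at precisely this point. Instead of fixing $i$ in advance, it selects for $\mu$-a.e.\ $x$ a \emph{single} index $i_0(x)-1$ determined by the Birkhoff averages $a^{\pm}_{E_j}(x)$ of $\log m(Df^{\pm N}|_{E_j})$ and $\log\|Df^{\pm N}|_{E_j}\|$: domination forces the bundles $E_j$ with $j\ge i_0(x)$ to have forward minimal expansion exceeding $\tfrac{\log 2}{2}$ and those with $j\le i_0(x)-2$ to have backward norm below $-\tfrac{\log 2}{2}$, leaving at most one ``center'' index in between. The Pliss lemma then furnishes infinitely many forward (resp.\ backward) hyperbolic times along which the $E_{i_0\cdots\ell}$-projection (resp.\ the $E_{1\cdots(i_0-2)}$-projection) of any $y\in B_\infty$ contracts to zero, so $B_\infty\subset\mathcal F^{\,i_0(x)-1}_x$ for that one index (Lemma~\ref{local ball and foliation}). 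The leafwise covering estimate and the passage to $\min(\Delta_f^+,\Delta_f^-)$ via $h^*(f,\mu,\vep)=h^*(f^{-1},\mu,\vep)$ on ergodic components then run much as you outline. Note also that no Lusin--Egoroff uniformization is needed: the scale $\vep_N=\rho(N)$ and the constants $C_1,C_2$ depend only on $f,\Lambda,N$, so the error term $\frac{2+\log C_2}{N}$ is already uniform over all $\mu\in\mathcal M_{inv}(f,\Lambda)$.
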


\noindent{\it Remark.} The tail entropy  was studied with respect to a dominated splitting over the manifold $M$ by Buzzi, Crovisier and Fisher (Theorem 7.6 of \cite{BCF}). In Theorem \ref{uppersemi}, we focus on the uniform difference between the  tail entropy of measures and the Lyapunov exponents of themselves relative to the evolution time.

In order to use the measure theoretic tail entropy to estimate  the difference between the full metric entropy $h_{\mu}(f)$ and the metric entropy  $h_{\mu}(f, \mathcal P)$ with respect to some partition $\mathcal P$, we further establish the following theorem which is a strengthening version of Proposition 2.1 of \cite{LSW} for  the use of infinite Bowen balls in the definition of tail entropy here.   

\begin{Thm}\label{tail of metric entropy} Let $M$ be a compact metric space  and $f: M\to M$  a
	homeomorphism with finite topological entropy. For any    $\mu \in \mathcal{M}_{inv}(f,M)$,  it holds that  
	$$h_{\mu}(f)-h_{\mu}(f, \mathcal P)\leq h^*(f,\mu, \rho)$$
	\n for any finite measurable partition $\mathcal P$ with $diam(\mathcal P) \leq \rho.$

\end{Thm}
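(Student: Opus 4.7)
The plan adapts the Misiurewicz-type decomposition used in Proposition 2.1 of \cite{LSW}: bound $h_\mu(f,\mathcal{Q})-h_\mu(f,\mathcal{P})$ for an arbitrary finite partition $\mathcal{Q}$ by a conditional entropy along the $\sigma$-algebra generated by $\mathcal{P}$ and its iterates, and exploit the geometric fact that the atom of $\mathcal{P}^{[-N,N]}:=\bigvee_{i=-N}^{N}f^{-i}\mathcal{P}$ containing $x$ is contained in the closed two-sided Bowen ball $\{y:d(f^ix,f^iy)\le\rho,\,|i|\le N\}$, and hence shrinks into $\overline{B_\infty(f,x,\rho)}$ as $N\to\infty$, thanks to $\diam(\mathcal{P})\le\rho$. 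The essential strengthening over \cite{LSW} is that the bound must be expressed in terms of the tail entropy defined via the \emph{infinite} Bowen ball rather than a fixed finite one.

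Concretely, since $h_\mu(f)=\sup_\mathcal{Q} h_\mu(f,\mathcal{Q})$ over finite measurable partitions, fix any such $\mathcal{Q}$. Using $h_\mu(f,\mathcal{P})=h_\mu(f,\mathcal{P}^{[-N,N]})$ together with the standard subadditivity for joins,
\begin{equation*}
h_\mu(f,\mathcal{Q})\ \le\ h_\mu(f,\mathcal{P})\ +\ \limsup_{n\to\infty}\tfrac{1}{n}\,H_\mu\bigl(\mathcal{Q}^n\,\bigm|\,\mathcal{P}^{[-N,N+n-1]}\bigr),
\end{equation*}
so it suffices to bound this conditional term by $h^*(f,\mu,\rho)$ in the limit $N\to\infty$. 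Writing $H_\mu(\mathcal{Q}^n\mid\mathcal{P}^{[-N,N+n-1]})=\sum_P\mu(P)H_{\mu_P}(\mathcal{Q}^n)\le\sum_P\mu(P)\log\#\{Q\in\mathcal{Q}^n:Q\cap P\ne\emptyset\}$ and using the diameter bound on $\mathcal{P}$, the atom $P$ containing $x$ satisfies $P\subseteq\{y:d(f^ix,f^iy)\le\rho,\,-N\le i\le N+n-1\}$. Shifting by $m=\lfloor(n-1)/2\rfloor$ iterates and invoking $f$-invariance of $\mu$ symmetrizes the window: the count becomes that of atoms of $\bigvee_{j=-m}^{n-1-m}f^{-j}\mathcal{Q}$ intersecting the closed two-sided Bowen ball $\overline{B_M(f,f^mx,\rho)}$ with $M\approx N+n/2$. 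A standard separation/spanning-number comparison (at a scale $\delta_\mathcal{Q}>0$ adapted to $\mathcal{Q}$, obtained by slightly fattening $\mathcal{Q}$ to an open cover) bounds this count above by $C_\mathcal{Q}\cdot r_n(f,\overline{B_M(f,f^mx,\rho)},\delta_\mathcal{Q})$.

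Taking $\limsup_n\frac{1}{n}\log$, integrating against $\mu$, and successively sending $N\to\infty$ and $\delta_\mathcal{Q}\to 0$ should produce the target $\int h^*(f,x,\rho)\,d\mu(x)=h^*(f,\mu,\rho)$, the last identity being the ergodic-decomposition definition combined with Proposition 2.8 of \cite{CY}. The main obstacle is precisely this final limit passage: the sets $\overline{B_M(f,x,\rho)}$ depend jointly on $N$ and $n$ through $M=N+\lceil n/2\rceil$, and shrink to $\overline{B_\infty(f,x,\rho)}$ only in the limit $M\to\infty$, so one must verify
\begin{equation*}
\limsup_{n\to\infty}\tfrac{1}{n}\log r_n\bigl(f,\overline{B_{N+\lceil n/2\rceil}(f,x,\rho)},\delta\bigr)\ \xrightarrow{N\to\infty}\ h(f,B_\infty(f,x,\rho),\delta)
\end{equation*}
with enough uniformity in $x$ to pass to the integral. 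The expected remedy uses the $G_\delta$ representation $B_\infty=\bigcap_M B_M$ and the compactness of $M$: for $N$ large relative to $\delta$, any $(n,\delta)$-spanning set of $\overline{B_{N+\lceil n/2\rceil}(f,x,\rho)}$ can be perturbed, without increasing its cardinality, into an $(n,\delta/2)$-spanning set of a small neighbourhood of $\overline{B_\infty(f,x,\rho)}$, identifying the two exponential growth rates in the limit and closing the argument.
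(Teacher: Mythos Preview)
Your reduction to a conditional-entropy bound and the passage to spanning numbers of Bowen balls are correct and mirror the setup behind Proposition~2.1 of \cite{LSW}. The genuine gap is the final limit, which is precisely the new content of the theorem over \cite{LSW}.

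Concretely, after your symmetrisation the bound you need is, for $\mu$-a.e.\ $x$,
\[
\lim_{\delta\to 0}\ \limsup_{n\to\infty}\frac{1}{n}\log r_n\bigl(f,\overline{B_{N+\lceil n/2\rceil}(f,x,\rho)},\delta\bigr)\ \le\ h(f,B_\infty(f,x,\rho)).
\]
Since $n\to\infty$ already forces $N+\lceil n/2\rceil\to\infty$, the parameter $N$ is irrelevant here; sending $N\to\infty$ buys nothing. Your ``remedy'' is stated in the wrong direction: any $(n,\delta)$-spanning set of the larger set $\overline{B_{N+\lceil n/2\rceil}}$ trivially spans the smaller $B_\infty$, so no perturbation is needed for that implication. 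What you actually require is the reverse comparison, namely that $r_n$ of the large, $n$-dependent ball is controlled by $r_n$ of the intersection $B_\infty$. Compactness alone does not give this: from $\overline{B_M}\searrow \overline{B_\infty}$ one only gets $\overline{B_M}\subset U$ for any fixed open $U\supset \overline{B_\infty}$ and $M$ large, but bounding $r_n(f,U,\delta)$ by $r_n(f,B_\infty,\delta)$ is exactly the missing step. In general there is no reason for $\inf_M h(f,\overline{B_M},\delta)$ to equal $h(f,B_\infty,\delta)$.

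The paper closes this gap by a different, ergodic mechanism rather than a set-theoretic one. After reducing to ergodic $\mu$, for $\mu$-a.e.\ $x$ one fixes $\gamma>0$, chooses $L(x)$ and an $(L(x),\delta)$-spanning set of $B_\infty(f,x,\rho)$ of cardinality at most $e^{L(x)(h^*(f,\mu,\rho)+\gamma)}$, and then uses compactness \emph{once} to find $T(x)$ with the same set still $(L(x),\delta)$-spanning $B_{T(x)}(f,x,\rho)$. The Birkhoff ergodic theorem guarantees that along the orbit of $x$ the density of times $k$ with $L(f^kx)\le j$ and $T(f^kx)\le j$ tends to $1$ as $j\to\infty$; Bowen's concatenation lemma (Lemma~2.1 of \cite{Bowen}) then assembles an $(n,2\delta)$-spanning set of $B_n(f,x,\rho)$ from these local pieces, with only a subexponential penalty at the bad times. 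This orbitwise concatenation, not a compactness-of-nested-sets argument, is the substantive step you are missing.
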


In what follows, applying Theorems \ref{uppersemi} and \ref{tail of metric entropy}, we may deduce the upper semi-continuity property of metric entropy in case that $\Delta_f(\mu)=0$.

\begin{Cor}\label{uppersemicon}
	
	Let  $f\in \Diff(M)$ and   
	$T_{\Lambda}M=E_{1} \oplus_{<}  \cdots\oplus_{<} E_{\ell}$ be a dominated splitting  over a compact  $f$-invariant set $\Lambda$.    
Then  the metric entropy map in   $\mathcal M_{inv}(f, \Lambda)$  is upper semi-continuous at any $\mu$ with $\Delta_f(\mu)=0$.
	
\end{Cor}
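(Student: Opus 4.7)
The plan is to combine Theorems~\ref{uppersemi} and~\ref{tail of metric entropy}, reducing the upper semi-continuity of $\nu\mapsto h_{\nu}(f)$ at $\mu$ to (i) the classical fact that $\nu\mapsto h_{\nu}(f,\mathcal{P})$ is upper semi-continuous at measures not charging $\partial\mathcal{P}$, and (ii) a control on $\Delta_{f}(\mu_{n};N)$ along a weak-$*$ convergent sequence $\mu_{n}\to\mu$. Fix $\mu\in\mathcal{M}_{inv}(f,\Lambda)$ with $\Delta_{f}(\mu)=0$ and let $\mu_{n}\to\mu$ weakly; the goal is $\limsup_{n}h_{\mu_{n}}(f)\le h_{\mu}(f)$.

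Given $\eta>0$, apply Theorem~\ref{uppersemi} to obtain $N$ with $h^{*}(f,\nu,\vep_{N})\le\Delta_{f}(\nu;N)+\eta$ for every $\nu\in\mathcal{M}_{inv}(f,\Lambda)$. Fix a finite measurable partition $\mathcal{P}$ of $\Lambda$ with $\diam(\mathcal{P})\le\vep_{N}$ and $\mu(\partial\mathcal{P})=0$. By Theorem~\ref{tail of metric entropy},
\begin{equation*}
h_{\mu_{n}}(f)\le h_{\mu_{n}}(f,\mathcal{P})+h^{*}(f,\mu_{n},\vep_{N})\le h_{\mu_{n}}(f,\mathcal{P})+\Delta_{f}(\mu_{n};N)+\eta.
\end{equation*}
The standard upper semi-continuity of $\nu\mapsto h_{\nu}(f,\mathcal{P})$ at $\mu$ gives $\limsup_{n}h_{\mu_{n}}(f,\mathcal{P})\le h_{\mu}(f,\mathcal{P})\le h_{\mu}(f)$, so it remains to bound $\limsup_{n}\Delta_{f}(\mu_{n};N)$.

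For this, I would exploit that the bundles $E_{i}$ of a dominated splitting vary continuously in $x$, so $\log^{+}\|(D_{x}f^{\pm N}\mid_{E_{i}})^{\wedge}\|$ is continuous on $\Lambda$. The elementary inequality $\int\min\le\min\int$ produces the continuous upper bound
\begin{equation*}
\Delta_{f}(\nu;N)\le G_{N}(\nu):=\min_{i,\pm}\frac{1}{N}\int\log^{+}\|(D_{x}f^{\pm N}\mid_{E_{i}})^{\wedge}\|\,d\nu,
\end{equation*}
whence $\limsup_{n}\Delta_{f}(\mu_{n};N)\le G_{N}(\mu)$. Applying Kingman's subadditive ergodic theorem to the subadditive cocycle $\log\|(D_{x}f^{N}\mid_{E_{i}})^{\wedge}\|$ (which equals its positive part, since exterior-power norms are $\ge 1$), one obtains $G_{N}(\mu)\to\min_{i,\pm}\int\Delta_{f}^{\pm}(x,E_{i})\,d\mu$ as $N\to\infty$. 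When $\mu$ is ergodic this limit equals $\Delta_{f}(\mu)=0$; combining with the previous display and then letting $N\to\infty$ and $\eta\to 0$ settles the ergodic case.

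The main obstacle is the non-ergodic case: the hypothesis $\Delta_{f}(\mu)=0$ requires only that, at each $\mu$-typical $x$, \emph{some} pair $(i,\pm)$ annihilates $\Delta_{f}^{\pm}(x,E_{i})$, whereas the limit of $G_{N}(\mu)$ vanishes only when a single pair $(i,\pm)$ works simultaneously for $\mu$-typical $x$. To overcome this I would pass to the ergodic decomposition $\mu=\int m\,d\tau(m)$: since $\Delta_{f}(m)=0$ for $\tau$-a.e.\ $m$, the ergodic case applies to each component $m$, and the affineness relations $h_{\nu}(f)=\int h_{m}(f)\,d\tau_{\nu}(m)$, $h^{*}(f,\nu,\cdot)=\int h^{*}(f,m,\cdot)\,d\tau_{\nu}(m)$, $\Delta_{f}(\nu;N)=\int\Delta_{f}(m;N)\,d\tau_{\nu}(m)$, combined with bounded convergence, suggest replacing $G_{N}$ by the finer bound $\nu\mapsto\int G_{N}(m)\,d\tau_{\nu}(m)$. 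Establishing the requisite upper semi-continuity of this refined bound in $\nu$, despite the generic discontinuity of the ergodic-decomposition map, is the main technical hurdle in extending the argument from ergodic to arbitrary $\mu$.
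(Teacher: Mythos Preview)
Your outline coincides with the paper's proof: both combine Theorems~\ref{uppersemi} and~\ref{tail of metric entropy} with the upper semi-continuity of $\nu\mapsto h_{\nu}(f,\mathcal{P})$ at a partition satisfying $\mu(\partial\mathcal{P})=0$, and both reduce the question to showing that $\Delta_f(\nu;N)$ is small whenever $\nu$ is weak-$*$ close to $\mu$ and $N$ is large. The only divergence is in how this last step is handled. The paper first uses $\Delta_f(\mu;N)\to\Delta_f(\mu)=0$ to obtain $\Delta_f(\mu;N)\le\delta$ for large $N$, and then asserts, without further justification, that $\nu\mapsto\Delta_f(\nu;N)$ is \emph{continuous} on $\mathcal{M}_{inv}(f,\Lambda)$, concluding $\Delta_f(\nu;N)\le 2\delta$ for $\nu$ near $\mu$. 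Your route via the continuous majorant $G_N(\nu)$ and the subadditive limit $G_N(\mu)\to\min_{i,\pm}\int\Delta_f^{\pm}(x,E_i)\,d\mu$ reaches the same conclusion when $\mu$ is ergodic, since in that case the limit equals $\Delta_f(\mu)=0$.

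The obstacle you isolate in the non-ergodic case --- that the minimizing pair $(i,\pm)$ may vary across ergodic components, so $\lim_N G_N(\mu)$ can be strictly positive even though $\Delta_f(\mu)=0$ --- is precisely the content hidden behind the paper's one-line continuity assertion, which the paper does not elaborate on. There is therefore no idea missing on your side relative to the paper's argument; you have simply been more explicit about a step the paper takes for granted.
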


In fact, Corollary \ref{uppersemicon} could  give rise to the upper semi-continuity  of metric entropy for plenty of systems with domination.

Combining with \cite{mane}  and Theorem 3.3 of \cite{ABC},  for a $C^1$ generic $f\in \Diff(M)$, a generic element $\mu$ in  $ \mathcal M_{erg}(f, M)$  admits dominated Oseledets splittings, so the corresponding $\Delta_f(\mu)=0$, which implies, by Corollary \ref{uppersemicon},  the upper semi-continuity of metric entropy at $\mu$ in  $\mathcal M_{inv}(f, \supp(\mu))$, where $\supp(\mu)$ is the support of $\mu$.    Moreover, given a homoclinic class $H$,  if we denote  by $\mathcal M_{per}(H)$ the closure of convex hull of periodic measures supported on $H$,  then  by Theorem 3.1' of \cite{ABC}, $\supp(\mu)=H$ and $h_{\mu}(f)=0$ for generic $\mu\in \mathcal M_{per}(H) $, thus  we can obtain 

\begin{Cor}\label{uppersemigeneric}
	
 For a $C^1$ generic $f$ in $\Diff(M)$ and any homoclinic class $H$ of  $f$, 	the set of continuity points of metric entropy in $\mathcal M_{inv}(f, H)$ includes a residual subset of  $ \mathcal M_{per}(H)$.
	
\end{Cor}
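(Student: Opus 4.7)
The plan is to reduce continuity of the entropy map at a generic $\mu\in\mathcal M_{per}(H)$ to upper semi-continuity (via the zero-entropy property) and then supply the latter by Corollary~\ref{uppersemicon}. I would work in the $C^1$-residual $\mathcal G\subset\Diff(M)$ obtained by intersecting: (i) the residual set from the preceding paragraph of the introduction, under which every homoclinic class $H$ of $f$ admits a dominated splitting $T_HM=E_1\oplus_{<}\cdots\oplus_{<}E_\ell$ (via Ma\~{n}\'{e} and Theorem~3.3 of \cite{ABC}, combined with the generic structure of dominated splittings over homoclinic classes); and (ii) the residual set of Theorem~3.1' of \cite{ABC}, providing for each such $H$ a residual $\mathcal R_0\subset\mathcal M_{per}(H)$ on which $\supp(\mu)=H$ and $h_\mu(f)=0$.

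For $\mu\in\mathcal R_0$, lower semi-continuity at $\mu$ is automatic from $h_\nu(f)\ge0=h_\mu(f)$, so I only need upper semi-continuity. To apply Corollary~\ref{uppersemicon} with $\Lambda=H$ it remains to verify $\Delta_f(\mu)=0$. Since $\log^+\|(Df^N|_{E_i})^{\wedge}\|$ is subadditive in $N$, the continuous functions $\mu\mapsto\Delta_f(\mu;N)$ decrease pointwise to $\Delta_f(\mu)$; hence $\Delta_f$ is upper semi-continuous on $\mathcal M_{inv}(f,H)$ and the level set $A:=\{\mu\in\mathcal M_{per}(H):\Delta_f(\mu)=0\}$ is a $G_\delta$.

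To show $A$ is dense in $\mathcal M_{per}(H)$, hence residual, I would observe that every periodic point $p\subset H$ satisfies $\Delta_f(p)=0$. Indeed, under our genericity the dominated splitting on $H$ refines the index structure of periodic orbits in $H$, so at each such $p$ some bundle $E_i|_p$ has Lyapunov exponents of a single sign; this makes either $\Delta_f^+(p,E_i)$ or $\Delta_f^-(p,E_i)$ vanish, and therefore $\Delta_f(p,E_i)=0$. Convex combinations of periodic measures on $H$ consequently lie in $A$, and since they are dense in $\mathcal M_{per}(H)$ by definition, so is $A$. Setting $\mathcal R:=\mathcal R_0\cap A$, a residual subset of $\mathcal M_{per}(H)$, Corollary~\ref{uppersemicon} yields upper semi-continuity of $\nu\mapsto h_\nu(f)$ at each $\mu\in\mathcal R$, which combined with the trivial lower semi-continuity gives continuity of the entropy map at $\mu$ in $\mathcal M_{inv}(f,H)$.

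The hard part will be justifying the index-compatibility of the dominated splitting over a homoclinic class under $C^1$ genericity, i.e.\ that some bundle in the dominated splitting restricted to any periodic orbit in $H$ carries only exponents of one sign. This is a classical consequence of Pugh's closing lemma, Ma\~{n}\'{e}'s ergodic closing lemma, and the structure theorems of Bonatti--Gourmelon--Vivier and Abdenur--Bonatti--Crovisier--D\'{\i}az--Wen on homoclinic classes of $C^1$-generic diffeomorphisms, which I would invoke as a black box rather than reprove.
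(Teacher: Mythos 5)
Your strategy (upper semi-continuity from Corollary~\ref{uppersemicon}, lower semi-continuity for free from $h_\mu(f)=0$, and a residual set construction) shares the skeleton of the paper's argument, but the way you procure the dominated splitting and the $\Delta_f=0$ condition diverges from the paper in a way that introduces a genuine gap.

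The paper does \emph{not} claim that, for $C^1$-generic $f$, every homoclinic class $H$ carries a dominated splitting $T_HM=E_1\oplus_{<}\cdots\oplus_{<}E_\ell$. What Ma\~n\'e and Theorem~3.3 of \cite{ABC} give is that a \emph{generic ergodic measure} $\mu$ has a dominated Oseledets splitting; since a dominated splitting over an invariant set extends continuously to its closure, this yields a dominated splitting over $\supp(\mu)$ for that particular $\mu$. The paper then pairs this with Theorem~3.1' of \cite{ABC} (generic $\mu\in\mathcal M_{per}(H)$ has $\supp(\mu)=H$, is ergodic, and has $h_\mu(f)=0$) so that, for the generic $\mu$ one is working with, $\supp(\mu)=H$ itself acquires the dominated splitting needed to invoke Corollary~\ref{uppersemicon}. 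Your item (i) instead posits up front a $C^1$-residual set on which \emph{every} homoclinic class has a dominated splitting. That is a strictly stronger statement than what the cited results provide, and it is not true in general: homoclinic classes with no dominated splitting persist $C^1$-robustly (this is precisely the setting of \cite{BCF}). Without a dominated splitting over $H$ the quantities $\Delta_f(\cdot;N)$ and $\Delta_f(\cdot)$ are not even defined, and Corollary~\ref{uppersemicon} cannot be applied with $\Lambda=H$.

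Separately, the part you flag as ``the hard part''---showing that at each periodic $p\subset H$ some bundle $E_i|_p$ carries exponents of a single sign---is actually automatic once a nontrivial dominated splitting ($\ell\ge2$) is in hand, and needs no closing-lemma or index-structure machinery. If $E_1$ has no positive exponent then $\Delta_f^+(p,E_1)=0$; if $E_1$ does have a positive exponent, domination forces every exponent on $E_2,\dots,E_\ell$ to be positive, so $\Delta_f^-(p,E_\ell)=0$. Either way $\Delta_f(p)=0$, and more generally $\Delta_f(\mu)=0$ for every $\mu\in\mathcal M_{inv}(f,H)$, so the $G_\delta$/density detour through the level set $A$ is redundant: $A$ is all of $\mathcal M_{per}(H)$. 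The genuine work lives exactly where you postponed it, namely in producing the dominated splitting over $H$ at all; the paper's route through generic ergodic measures and their supports is the mechanism that does this, and it is not interchangeable with the blanket assumption you made in item (i).
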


 In the setting of conservative systems, for a $C^1$ generic $f$ in $\Diff_{vol}(M)$ which denotes the space of  $C^1$ diffeomorphisms on $M$ preserving the volume measure $vol$,  by \cite{BochiV, ACW},  the Oseledets splitting of  $vol$ is dominated. Thus,
 
 \begin{Cor}\label{uppersemileb}
 	
 For a $C^1$ generic $f$ in $\Diff_{vol}(M)$,   
 	the volume measure $vol$ is an upper semi-continuity point of metric entropy in $ \mathcal M_{inv}(f, M)$.
 	
 \end{Cor}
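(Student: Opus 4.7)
The plan is to reduce the claim to Corollary \ref{uppersemicon} by verifying $\Delta_f(vol)=0$ for a $C^1$ generic $f\in\Diff_{vol}(M)$; the two ingredients already assembled in the excerpt then combine to finish the argument in a short computation.

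First I would invoke the Bochi--Viana dichotomy \cite{BochiV}, together with its refinements in \cite{ACW}, in the form quoted in the excerpt: for a $C^1$ generic $f\in\Diff_{vol}(M)$ the Oseledets decomposition of $vol$ comes from a (possibly trivial) dominated splitting $T_{M}=E_{1}\oplus_{<}\cdots\oplus_{<} E_{\ell}$ on $M$. In particular each bundle $E_{i}$ is contained in a single Oseledets subspace of $vol$, so there is a well-defined Lyapunov exponent $\lambda_{i}\in\RR$ shared by every vector of $E_{i}(x)$, for $vol$-a.e.\ $x$.

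Next I would compute $\Delta_{f}(vol)$ directly from this property. Since every Lyapunov exponent on $E_{i}$ equals $\lambda_{i}$, one has
\begin{equation*}
\Delta_{f}^{+}(x,E_{i})=\dim(E_{i})\cdot\max\{\lambda_{i},0\},\qquad \Delta_{f}^{-}(x,E_{i})=\dim(E_{i})\cdot\max\{-\lambda_{i},0\}.
\end{equation*}
Whatever the sign of $\lambda_{i}$ is, at least one of these two quantities vanishes, so $\Delta_{f}(x,E_{i})=\min\{\Delta_{f}^{+}(x,E_{i}),\Delta_{f}^{-}(x,E_{i})\}=0$ for every $i$ and $vol$-a.e.\ $x$. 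Taking the minimum over $i$ and integrating with respect to $vol$ gives $\Delta_{f}(vol)=0$.

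With $\Delta_{f}(vol)=0$ established, Corollary \ref{uppersemicon} applied to $\mu=vol$ (and $\Lambda=M$) immediately delivers the upper semi-continuity of the metric entropy map at $vol$ inside $\mathcal{M}_{inv}(f,M)$. The main obstacle is the first step: one must trust that \cite{BochiV, ACW} really provides a dominated splitting defined globally on $M$ whose bundles match the Oseledets subspaces of $vol$, so that each $E_{i}$ carries only a single Lyapunov exponent (and the trivial case $\ell=1$, in which all Lyapunov exponents of $vol$ vanish, is covered by the same formula). Once this generic input is accepted, the rest of the argument is just the elementary bookkeeping above.
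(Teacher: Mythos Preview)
Your proposal is correct and follows exactly the paper's own route: the paper simply cites \cite{BochiV, ACW} to assert that for a $C^1$ generic $f\in\Diff_{vol}(M)$ the Oseledets splitting of $vol$ is dominated, and then invokes Corollary~\ref{uppersemicon}. Your write-up merely spells out the easy verification that a dominated Oseledets splitting forces $\Delta_f(x,E_i)=0$ for each $i$ and hence $\Delta_f(vol)=0$, which the paper leaves implicit.
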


Besides, by Corollary \ref{uppersemicon}, we may also get an alternative criteria for the upper semi-continuity  of metric entropy for dominated splittings consisting of bundles  without mixed behavior or
of one dimension in \cite{LVY, CY, ZYC}, since  $\Delta_f(\mu)=0$ is satisfied in those contexts.

\section{Dynamics of foliations}
Let $f\in \Diff(M)$,   $\Lambda$ be a compact  $f$-invariant set and there exists a   dominated splitting 
$T_{\Lambda}M=E_{1} \oplus_{<}  \cdots\oplus_{<} E_{\ell}$ over $\Lambda$. For $1\leq i\le j\le \ell$, denote $E_{i(i+1)\cdots j}=E_i\oplus \cdots \oplus E_j$.  Let $\xi_0$ be a positive lower bound for the angles between any pair of  bundles $E_{i}$ and $E_{j}$, $1\le i\neq j\le \ell$.  By \cite{HPS, BW},  with respect to the given domination structure, one may have a family of local invariant fake foliations. In the following content, given a foliation $\mathcal F $ and a point $y$ in
the domain, we denote by $\mathcal F(y)$ the leaf through $y$  and by  $\mathcal F(y, \rho)$ the neighborhood of  radius $\rho$ around $y$  inside the leaf. 

\begin{Prop} For any $\xi\in (0, \xi_0/4)$, there exist $0<\rho_2<\rho_1$ such that the neighborhood $B(x, \rho_1)$ of every  $x\in \Lambda$ admits   foliations $\{\mathcal F^*_x:\,x\in \Lambda\}$, $*\in \{i(i+1)\cdots j: 1\leq i\le j\le \ell\}$, such that for any $y\in B(x, \rho_1)$ and $*\in \{i(i+1)\cdots j: 1\leq i\le j\le \ell\}$,  
	\begin{itemize}\item[(i)] almost tangency: $T_y \mathcal F^*_x(y)$ lies in a cone of width $\xi$ of $E_*(x)$;
		\\
		\item[(ii)] local invariance: $f^{\pm}\mathcal F^*_x(y, \rho_2) \subset \mathcal F^*_{f^{\pm}(x)}(f^{\pm}(y))$;\\
		\item[(iii)] coherence: $\mathcal F^*_x$ is subfoliated by  $\mathcal F^{\#}_x$ whenever $\#$ is a subsentence of $*$.
	\end{itemize}
	
\end{Prop}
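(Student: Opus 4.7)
The plan is to follow the classical Hirsch--Pugh--Shub / Burns--Wilkinson construction (\cite{HPS, BW}), building the foliations one point at a time in the chart $\exp_x : T_xM \to M$ and then transporting back. Fix $\xi \in (0, \xi_0/4)$. For each $x \in \Lambda$ I would first pull back the dynamics through the exponential map: set $\tilde f_x = \exp_{f(x)}^{-1} \circ f \circ \exp_x$, defined on some uniform ball $B(0, r) \subset T_xM$. Since $\Lambda$ is compact, the map $\tilde f_x$ is uniformly $C^1$-close to its linearization $D_xf : T_xM \to T_{f(x)}M$ on smaller balls, with closeness that can be made arbitrarily good by shrinking $r$. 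Crucially, $D_xf$ respects the dominated splitting $T_xM = E_1(x) \oplus_{<} \cdots \oplus_{<} E_\ell(x)$ with domination constants uniform on $\Lambda$.

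Next I would globalize: using a radial bump function equal to $1$ on $B(0, r/2)$ and supported in $B(0, r)$, I would interpolate $\tilde f_x$ with $D_xf$ to obtain a map $\hat f_x : T_xM \to T_{f(x)}M$ that agrees with $\tilde f_x$ on $B(0, r/2)$, agrees with $D_xf$ outside $B(0, r)$, and globally is a $C^1$-small perturbation of $D_xf$ (with perturbation size $O(r)$, uniform in $x$). Because $D_xf$ is uniformly dominated on $\Lambda$, for $r$ small enough the family $\{\hat f_x\}_{x\in\Lambda}$ inherits a dominated splitting into the constant bundles $E_i(x)$, uniformly in $x$.

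Then for every sub-aggregate $E_* = E_{i(i+1)\cdots j}$, the standard graph transform / center-manifold argument for dominated sequences of bundle maps produces an $\hat f$-invariant foliation $\hat{\mathcal F}^*_x$ of $T_xM$ whose leaves are $C^1$ graphs over $E_*(x)$ with tangent spaces in a cone of width $\xi$ around $E_*(x)$; taking $r$ sufficiently small forces the cone width to be below $\xi$. Coherence (iii) is automatic from this construction: the graph transform for a longer sum factors through the graph transforms for its summands, so whenever $\#$ is a subsentence of $*$ the leaves of $\hat{\mathcal F}^\#_x$ lie inside those of $\hat{\mathcal F}^*_x$. Pushing forward by $\exp_x$ yields the desired foliations $\mathcal F^*_x$ on $B(x, \rho_1)$ for $\rho_1 < r/2$, and property (i) is preserved up to shrinking $\rho_1$ because $\exp_x$ is a near-isometry near the origin. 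For local invariance (ii), I choose $\rho_2$ so small that $f(\mathcal F^*_x(y, \rho_2)) \subset \exp_{f(x)}(B(0, r/2))$ for every $y \in B(x, \rho_1)$; then $\tilde f_x$ and $\hat f_x$ coincide on the relevant set, and the $\hat f$-invariance of $\hat{\mathcal F}^*$ translates into $f(\mathcal F^*_x(y, \rho_2)) \subset \mathcal F^*_{f(x)}(f(y))$, with the analogous statement for $f^{-1}$ obtained by repeating the construction for $f^{-1}$ (using that dominated splittings are also dominated for the inverse, in reversed order).

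The main obstacle is the uniformity across $x \in \Lambda$: the perturbation size $\|\hat f_x - D_xf\|_{C^1}$, the cone width produced by the graph transform, the choice of $r$ giving a dominated extension, and the radius $\rho_2$ for local invariance must all be selected independently of $x$. This uses only continuity of the bundles $E_i$ on $\Lambda$ and compactness, together with the uniform dominated estimate $\|D_xf\,v\| \le \tfrac12 \|D_xf\,w\|$ supplied by the adapted metric; once these uniform bounds are in place, everything else is a parameter chase through the standard HPS machinery.
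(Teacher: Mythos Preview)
Your proposal is correct and is precisely the standard Hirsch--Pugh--Shub / Burns--Wilkinson construction; the paper itself does not give a proof of this proposition but simply cites \cite{HPS, BW} and states the result. One minor remark: you do not need to repeat the construction for $f^{-1}$ to obtain local invariance under the inverse, since the graph-transform foliations are already invariant under the globalized bi-infinite sequence $\{\hat f_x\}$, and hence $\hat f_x^{-1}$ carries $\hat{\mathcal F}^*_{f(x)}$ to $\hat{\mathcal F}^*_x$; restricting to the region where $\hat f_x = \tilde f_x$ then gives the $f^{-1}$-statement directly.
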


Along the leaves of foliations $\mathcal F_x^*$, we could  define the projections  as follows: for
 $y\in B(x,\rho_1)$,  $1\leq i\leq \ell-1$, let 
\begin{eqnarray*}
[y]^{ 1\cdots i}_x&=&\mathcal F_x^{(i+1)\cdots \ell}(y) \cap\mathcal F_x^{1\cdots i}(x),\\[2mm]
[y]^{ (i+1)\cdots \ell }_x&=&\mathcal F_x^{1\cdots i}(y) \cap\mathcal F_x^{(i+1)\cdots \ell}(x),
\end{eqnarray*}
wherever they are well defined. 
The almost tangency property (i) and the uniform positive lower bound among  angles of different bundles $ E_*$  make us be able to choose a constant  $C_1>0$  such that for any $\rho\in (0,\rho_1/C_1)$, $y\in B(x, \rho)$ and  $*\in \{1\cdots i,\, (i+1)\cdots \ell: 1\le i\le \ell-1\}$,  one has 
$$[y]^{*}_x\in \mathcal F_x^*(x, C_1\rho).   $$
  By taking some local trivialization of
 the tangent bundle, for any $N\in \mathbb N$ and  $\rho\in(0,\rho_1/C_1)$, 
 we define \begin{eqnarray*}
 	\sigma(N,\rho)&=&\max\Big{\{}\log\big{(}\frac{\|(D_{x_1}f^{\pm N} )^{\wedge_k}\|}{\|(D_{x_2}f^{\pm N})^{\wedge_k} \|}\big{)}:\,x_j\in \mathcal F^*_x(x, C_1\rho),\,\,j=1,2,\\[2mm]
 	&&1\le k\le \dim E_*(x),\, *\in \{1\cdots i,\, (i+1)\cdots \ell: 1\le i\le \ell-1\},\, x\in \Lambda  \, \Big{\}}. 
 \end{eqnarray*}
Denote $e^{P}=\max\{\|D_x^{\pm}f\|: x\in M\}$.  For any $N\in \mathbb{N}$, one may let $\xi$ and $\rho_1$ small such that  $\rho(N)=\rho_1e^{-NP}/C_1$ satisfying $\sigma(N,\rho(N))<1/N$. 

\begin{Lem}[Pliss\cite{Pliss}]\label{lempliss}
	Let $b_{0}\leq c_{2}<c_{1}$ and
	$\theta=\dfrac{c_{1}-c_{2}}{c_{1}-b_{0}}.$  Given real numbers
	$b_{1},\cdots, b_{T}$ with $\sum\limits_{i=1}^{T}b_{i}\leq c_{2}T $
	and $b_{i}\geq b_{0}$ for every $i$,  there exists $\tau\geq
	T\theta$ and $1\leq k_{1}<k_{2}<\cdots < k_{\tau}\leq T $, such that
	$$\sum\limits_{i=k+1}^{k_{j}}b_i\leq c_{1}(k_{j}-k) , \quad\quad0\leq k<k_{j}, \ \ 1\leq j\leq \tau.$$
\end{Lem}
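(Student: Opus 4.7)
The plan is to recast the Pliss condition as a running-minimum property of shifted partial sums, and then to count those minima using the global average bound. First I would set $S_0=0$ and $S_k=\sum_{i=1}^{k}(b_i-c_1)$ for $1\le k\le T$, so that the desired inequality $\sum_{i=k+1}^{k_j}b_i\le c_1(k_j-k)$ becomes equivalent to $S_{k_j}\le S_k$. Thus the indices $k_1<\cdots<k_\tau$ I want to produce are precisely the \emph{weak running minima} of $(S_k)$: those $k\in\{1,\dots,T\}$ with $S_k\le S_j$ for every $0\le j<k$. It then suffices to show that the number $\tau$ of such indices satisfies $\tau\ge T\theta$.

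To carry this out I would track the sequence $M_k=\min_{0\le j\le k}S_j$. By definition $M_k\le M_{k-1}$, and a strict drop $M_k<M_{k-1}$ can happen only when $k$ is a running minimum, so in any telescoping sum only Pliss times contribute. At any Pliss time $k$ one has $M_k=S_k$ and $M_{k-1}\le S_{k-1}$, so
\[
M_{k-1}-M_k \;=\; M_{k-1}-S_k \;\le\; S_{k-1}-S_k \;=\; c_1-b_k \;\le\; c_1-b_0,
\]
where the last step uses only the pointwise lower bound $b_k\ge b_0$. Summing over $k=1,\dots,T$ then gives $-M_T\le \tau(c_1-b_0)$.

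The average hypothesis enters only at the end: from $\sum_{i=1}^{T}b_i\le c_2 T$ one obtains $S_T\le (c_2-c_1)T$, and $M_T\le S_T$ yields $-M_T\ge (c_1-c_2)T$. Combining with the previous bound, $(c_1-c_2)T\le\tau(c_1-b_0)$, i.e., $\tau\ge T\theta$, as desired.

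There is no substantial obstacle; this is a clean double-accounting argument. The two points I would keep an eye on are: to use the \emph{weak} running-minimum convention (with $\le$), so that the Pliss inequality is enforced against every earlier index including any previous Pliss index of equal height; and to derive the per-step bound $M_{k-1}-M_k\le c_1-b_0$ from the pointwise hypothesis $b_i\ge b_0$ alone, so that the average hypothesis $\sum b_i\le c_2 T$ is spent only once, against the total descent of $S$.
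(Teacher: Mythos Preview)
Your argument is correct. The paper does not supply its own proof of this lemma: it is stated with a citation to Pliss's original paper and used as a black box. Your running-minimum reformulation via $S_k=\sum_{i\le k}(b_i-c_1)$ and the telescoping bound on $M_k=\min_{j\le k}S_j$ is the standard clean proof, and all the inequalities check out, including the care you take to use weak minima so that the Pliss inequality holds against every earlier index.
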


\begin{Lem}\label{local ball and foliation}
There exists $N_0>0$ such that for any $N\ge N_0$ and $\mu\in \mathcal M_{inv}(f, \Lambda)$, for $\mu$-a.e.,  $x$,  $B_{\infty}(f,x, \rho(N))=\{x\}$ or $B_{\infty}(f,x, \rho(N))\subset \mathcal F_{x}^i(x,C_1 \rho(N))$ for some $i\in \{1, \cdots, \ell\}$.

\end{Lem}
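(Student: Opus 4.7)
The strategy is to use the fake foliations to decompose any nontrivial second point $y \in B_\infty(f,x,\rho(N))$ transversally, and then derive a Lyapunov contradiction at a $\mu$-generic base point unless the decomposition is degenerate, i.e.\ $y$ lies on a single $\mathcal{F}_x^k$ leaf. The plan is to choose $N_0$ so that for every $N \geq N_0$, $C_1\rho(N) < \rho_2$ (so local invariance can be iterated on leaf arcs of radius at most $C_1\rho(N)$) and $\sigma(N,\rho(N)) < 1/N$ (so leafwise Jacobian distortion over $N$ iterations is negligible). Fix $\mu \in \mathcal{M}_{inv}(f,\Lambda)$ and restrict attention to the full-measure set on which Oseledets' theorem holds.

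Given such a $y \neq x$, I would suppose for contradiction that $y \notin \mathcal{F}_x^k(x,C_1\rho(N))$ for any $k \in \{1,\dots,\ell\}$. By the coherence property (iii), this forces an index $i \in \{1,\dots,\ell-1\}$ with both $y_1 := [y]_x^{1\cdots i}$ and $y_2 := [y]_x^{(i+1)\cdots \ell}$ distinct from $x$. Combining local invariance (ii) with coherence, and using $d(f^n(x),f^n(y)) < \rho(N)$ for all $n \in \mathbb{Z}$, a straightforward induction will yield $f^n(y_1) = [f^n(y)]_{f^n(x)}^{1\cdots i}$ and $f^n(y_2) = [f^n(y)]_{f^n(x)}^{(i+1)\cdots \ell}$, both lying at leafwise distance in $(0,C_1\rho(N)]$ from $f^n(x)$ for every $n \in \mathbb{Z}$. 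Denote these leafwise distances by $u_n$ and $v_n$.

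Using almost tangency (i) together with the distortion bound $\sigma(N,\rho(N)) < 1/N$, the limits $\chi_u := \lim_{n\to\infty} (1/n)\log u_n$ and $\chi_v := \lim_{n\to\infty} (1/n)\log v_n$ will be identified with Lyapunov exponents of certain tangent directions in $E_{1\cdots i}(x)$ and $E_{(i+1)\cdots \ell}(x)$ respectively, up to an additive $O(1/N)$ error. Two-sided boundedness $u_n, u_{-n} \leq C_1\rho(N)$ then forces $\chi_u = 0$, and likewise $\chi_v = 0$. However, pointwise domination $\|Df|_{E_j}\| \geq 2\|Df|_{E_k}\|$ for $j > k$, averaged by Birkhoff's theorem, gives that every Lyapunov exponent on $E_{(i+1)\cdots \ell}$ exceeds every Lyapunov exponent on $E_{1\cdots i}$ by at least $\log 2$. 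Hence $\chi_v - \chi_u \geq \log 2 > 0$, contradicting $\chi_u = \chi_v = 0$ once $N$ is large enough.

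The hard part will be the rigorous identification of the leafwise growth rates $(1/n)\log u_n$, $(1/n)\log v_n$ with genuine Oseledets exponents of the derivative cocycle: this requires converting the leafwise distortion bound $\sigma(N,\rho(N)) < 1/N$, together with the almost tangency of $T\mathcal{F}^*$ to $E_*$, into a uniform comparison between the iterated arc length and the norm of $Df^n$ applied to a tangent vector of the initial arc at $x$, with total error well below the $\log 2$ gap produced by domination.
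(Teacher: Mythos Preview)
The step you flag as ``the hard part'' is not merely hard: in the $C^1$ category it is not valid, and without it the contradiction evaporates. The leafwise distance $u_n$ from $f^n(x)$ to $f^n(y_1)$ is controlled at each $N$-step by the norm and conorm of $Df^N$ restricted to $E_{1\cdots i}$, so one only obtains
\[
\liminf_{n\to\infty}\tfrac{1}{n}\log u_n \;\ge\; \text{(Birkhoff average of }\log m(Df|_{E_{1\cdots i}})\text{)}-O(1/N),
\qquad
\limsup_{n\to\infty}\tfrac{1}{n}\log u_n \;\le\; \text{(Birkhoff average of }\log\|Df|_{E_{1\cdots i}}\|\text{)}+O(1/N).
\]
These Birkhoff averages of one-step norms are \emph{not} Lyapunov exponents when $E_{1\cdots i}$ carries several exponents, and they do not enjoy the forward/backward symmetry $\chi^+=-\chi^-$ that Oseledets gives for a single tangent vector. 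Consequently two-sided boundedness of $u_n$ gives only $\limsup_{n\to+\infty}(1/n)\log u_n\le 0$ and $\limsup_{n\to+\infty}(1/n)\log u_{-n}\le 0$, which does \emph{not} force a limit equal to $0$. Concretely, domination plus boundedness of $u_n,v_n$ yields $u_n\to 0$ forward and $v_{-n}\to 0$ backward, and nothing prevents $u_{-n}$ and $v_n$ from also decaying; no contradiction arises. (There is also a secondary gap: even if your argument worked, it would only show each $y\in B_\infty$ lies on \emph{some} $\mathcal F_x^{k(y)}$, not that a single index works for all of $B_\infty$.)

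The paper's proof avoids this obstacle by never appealing to Oseledets exponents. It uses the Birkhoff averages $a_{E_i}^+(x)=\lim\frac{1}{nN}\sum_{k=0}^{n-1}\log m(D_{f^{kN}x}f^N|_{E_i})$ to locate a critical index $i_0=i_0(x)$ at which $a^+_{E_{i_0}}>\tfrac{\log 2}{2}$ first occurs; domination then forces $a^+_{E_j}\ge\tfrac{\log 2}{2}$ for $j\ge i_0$ and $a^-_{E_j}\le-\tfrac{\log 2}{2}$ for $j\le i_0-2$. Pliss' lemma produces infinitely many times $\tilde n_t$ along which the product $\prod\|Df^{-N}|_{E_{i_0\cdots\ell}}\|$ decays uniformly from $\tilde n_t$ back to every earlier time; combined with the distortion bound, this gives genuine backward contraction of the leaf $\mathcal F^{i_0\cdots\ell}$ and forces $[y]^{i_0\cdots\ell}=x$ for every $y\in B_\infty$. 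The analogous argument collapses $[y]^{1\cdots(i_0-2)}$, so all of $B_\infty$ sits inside the single leaf $\mathcal F_x^{\,i_0-1}$. The essential point is that one needs \emph{absolute} expansion/contraction information on the bundles (supplied by Birkhoff averages at the generic point, sharpened to hyperbolic times via Pliss), not the merely relative gap supplied by domination.
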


\begin{proof}
Let $N_0= [\frac{16}{\log2}]+1$. So,    $\sigma(N,\rho(N))<\frac{\log2}{16}$ for any $N\ge N_0$. For $\mu\in \mathcal M_{inv}(f, \Lambda)$ and  $1\le i\le \ell$, we denote for $\mu$-a.e., $x$, 
\begin{eqnarray*}a_{E_i}^+(x)&=&\lim_{n\rightarrow \infty} \frac{1}{|nN|}\sum_{k=0}^{n-1} \log m(D_{f^{k{N}}(x)}f^{N}\mid_{E_i}),\\[2mm]
a_{E_i}^-(x)&=&\lim_{n\rightarrow \infty} \frac{1}{|nN|}\sum_{k=0}^{n-1} \log \|D_{f^{-k{N}}(x)}f^{-{N}}\mid_{E_i}\|.	
\end{eqnarray*}
Let $$i_0(x)=\min\{1\le i\le \ell:\,  a_{E_i}^+(x)>\frac{\log2}{2}\},$$
then by the domination $T_{\Lambda}M=E_1\oplus_{<}\cdots \oplus 
_{<}E_{i_0(x)-1}\oplus_{<} E_{i_0} \oplus_{<}\cdots \oplus_{<} E_{\ell}$, one has  \begin{equation*}\begin{cases} &a_{E_{i_0(x)-1}}^-(x)\le\frac{\log2}{2}; \\[2mm]
 &a_{E_{j}}^-(x)\le-\frac{\log2}{2},\,\,\,\forall\,j\in [1,\, i_0(x)-2];\\[2mm]
	 &a_{E_{j}}^+(x)\ge \frac{\log2}{2},\,\,\,\forall\,j\in [i_0(x),\, \ell].
 \end{cases}
\end{equation*}
Hence  there exist $1<n_1<n_2<\cdots<n_t<\cdots$ such that for any $t\in \mathbb{N}$, 
$$\frac{1}{n_t}\sum_{k=0}^{n_t-1} \log m(D_{f^{k{N}}(x)}f^{{N}}\mid_{E_{i_0(x)}\oplus\cdots\oplus E_{\ell}})>\frac{\log2}{4},$$
i.e., 
$$\frac{1}{n_t}\sum_{k=1}^{n_t} \log \|D_{f^{k{N}}(x)}f^{-{N}}\mid_{E_{i_0(x)}\oplus\cdots\oplus E_{\ell}}\|<-\frac{\log2}{4}.$$
 Let $b_0=-NP, c_1=-\frac{\log2}{4}, c_2=-\frac{\log2}{8}$  and $\theta=\dfrac{c_{1}-c_{2}}{c_{1}-b_{0}}$. Applying Lemma  \ref{lempliss}, for each $n_t$, we can find $\tilde{n}_t\in [\theta n_t, n_t]$ such that

$$\sum\limits_{k=j+1}^{\tilde{n}_t}\log \|D_{f^{k{N}}(x)}f^{-{N}}\mid_{E_{i_0(x)}\oplus\cdots\oplus E_{\ell}}\|\leq -\frac{\log2}{8}(\tilde{n}_{t}-j) , \quad\quad\,\, 0\leq j<\tilde{n}_t.$$
By the choice of $N\ge N_0$,  it holds that
\begin{eqnarray*}
&&\|Df^{-{N}}_{z}\mid_{T_z\mathcal F^{i_0(x)(i_0(x)+1)\cdots\ell}_{y}(z)}\|\\[2mm]&\le&  2^{\frac{1}{16}}\|Df^{-{N}}_{y}\mid_{E_{i_0}\oplus\cdots\oplus E_{\ell}}\|,\quad\quad \forall z\in \mathcal F^{i_0(i_0+1)\cdots\ell}_{y}(y, C_1\rho(N)),\,\,\, \forall\,y\in \Lambda.
\end{eqnarray*}
Therefore, for $ 1\leq j\le \tilde{n}_t$, $$ f^{-j{N_0}}(\mathcal F^{i_0(i_0+1)\cdots\ell}_{f^{\tilde{n}_t}(x)}(f^{\tilde{n}_t}(x), C_1\rho(N)))\subset \mathcal F^{i_0(x)(i_0(x)+1)\cdots\ell}_{f^{\tilde{n}_t-j}(x)}(f^{\tilde{n}_t-j}(x), 2^{-\frac{(\tilde{n}_t-j)}{16}}C_1\rho(N)). $$
For any $y\in B_{\infty}(f, x, \rho(N))$,  $$[f^n(y)]^{i_0(x)(i_0(x)+1)\cdots \ell}\in \mathcal F^{i_0(x)(i_0(x)+1)\cdots\ell}_{f^{n}(x)}(f^{n}(x), C_1\rho(N)).$$ By the local invariance of fake foliations, 
$$[y]^{i_0(x)(i_0(x)+1)\cdots \ell}= f^{-n}([f^n(y)]^{i_0(i_0+1)\cdots \ell}),\quad \forall\,n\in \mathbb{N}.$$
Specially,  $$[y]^{i_0(x)(i_0(x)+1)\cdots \ell}= f^{-\tilde{n}_t}([f^{\tilde{n}_t}(y)]^{i_0(x)(i_0(x)+1)\cdots \ell})\in  \mathcal F^{i_0(x)(i_0(x)+1)\cdots\ell}_{x}(x,  2^{-\frac{\tilde{n}_t}{16}}C_1\rho(N)). $$ 
Letting $t\to +\infty,$ we get that $$[y]^{i_0(x)(i_0(x)+1)\cdots \ell}=\{x\}.$$
Similarly, one can deduce $[y]^{1\cdots (i_0(x)-2)}=\{x\}$ since $a_{E_{j}}^-(x)\le-\frac{\log2}{2}$ for any $j\in [1,\, i_0(x)-2]$. Then 
\begin{eqnarray*}
B_{\infty}(f,x, \rho(N))&\subset& \mathcal F_{x}^{1\cdots (i_0(x)-1)}(x,C_1\rho(N) ) \cap \mathcal F_{x}^{(i_0(x)-1)\cdots\ell}(x,C_1\rho(N))\\[2mm]
&\subset&  \mathcal F_{x}^{ i_0(x)-1}(x,C_1\rho(N)).
\end{eqnarray*} Furthermore,  if $a_{E_{i_0(x)-1}}^-(x)\le-\frac{\log2}{2}$, then $[y]^{1\cdots (i_0(x)-1)}=\{x\}$, thus $$B_{\infty}(f,x,  C_1\rho(N))=\{x\}.$$ 
\end{proof}

\section{Tail entropy along leaves}

By Lemma \ref{local ball and foliation},  given $N\ge N_0$, $\mu\in \mathcal M_{inv}(f, \Lambda)$, without loss of generality, for $\mu$-a.e.,  $x$,  we may assume that $B_{\infty}(f,x, C_1\rho(N)) \subset \mathcal F_{x}^{ i}(x,C_1\rho(N))$ for some  $i$. Therefore, in what follows, we only need analyze the dynamics on leaves $\mathcal F_y^{*}(y)$,  $*\in \{1,\cdots, \ell\}$, $y\in\Lambda$. For the simplicity of symbols, we write $V_{y}^*=\mathcal F_y^{ *}(y)$. Moreover, we denote by $B_{V_y^*}(z, \rho)$ the ball in $V_y^*$ centered at $z$ with radius $\rho$, and define Bowen balls along leaves as follows
$$B_{V_y^*, n}(z,\rho)=\big{\{} p\in V_y^*: d_{V_{f^j(y)}^*}(f^j(p), f^j(z))<\rho,\, |j|<n\big{\}},$$
where $d_{V}$ denotes the distance in a submanifold $V\subset M$.  
For the convenience of computations, we intend to approximate the local complexity of dynamical systems   by that of their linearity.  Taking local trivializations,  we may assume $V_y^*\subset \mathbb{R}^{\dim E_*}$. Note that there exists a constant $C_2>0$ depending only  on  $\dim M$  such that 
 for any $1\le j\le \dim M$ and any linear map $X: \mathbb{R}^{j}\to \mathbb{R}^{j}$, one has $$\Gamma(X(B_{\mathbb R^{j}}(0,1)), \mathbb R^{j},   1/2)\le C_2\|X^{\wedge}\|^+,$$ 
 where $\Gamma(U, V,  \rho)$ denotes the minimal cardinality of covers for $U$ whose elements  are balls with radius $\rho$  in a manifold $V$, and $\|X^{\wedge}\|^+=e^{\log^+\|X^{\wedge}\|}$.

\begin{Lem}\label{scaling}
There exists $\eta_1>0$ such that for any $y\in \Lambda$, $*\in \{1,\cdots, \ell\}$,  $z\in B_{V_y^*}(y, C_1\rho(N))$ and  $\eta\in (0,\eta_1)$, 
$$\Gamma(f^{\pm N}(B_{V_y^*}(z,\eta)), V_{f^{\pm N}(y)}^*,  \eta/2))\le C_2e^{\frac{2}{N}} \|(D_yf^{\pm N}\mid_{E_{i}})^{\wedge}\|^+.$$

\end{Lem}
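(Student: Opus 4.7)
The plan is to reduce the problem to the stated linear covering estimate $\Gamma(X(B_{\mathbb R^j}(0,1)),\mathbb R^j,1/2)\le C_2\|X^\wedge\|^+$ applied to $X=D_zf^{\pm N}|_{T_z V_y^*}$, and then to absorb two small distortion factors, each of size $e^{1/N}$, into the prefactor $e^{2/N}$ that appears in the conclusion. To set up, I fix local trivializations of the tangent bundle near $y$ and $f^{\pm N}(y)$ so that the leaves $V_y^*$ and $V_{f^{\pm N}(y)}^*$ sit as submanifolds of $\mathbb R^{\dim E_*}$. By almost tangency, after choosing the cone width $\xi$ small at the outset (which is permitted since the fake foliations were produced for every $\xi\in(0,\xi_0/4)$), the induced distance on $V_y^*$ is comparable to the ambient one and $T_p V_y^*$ lies in a very narrow cone around $E_*(p)$ for every $p$ in the $C_1\rho(N)$-neighborhood of $y$.

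Next I would choose $\eta_1>0$ small enough, depending on $N$ and on the uniform $C^1$-modulus of continuity of $f^{\pm N}$ on a compact neighborhood of $\Lambda$, so that for every $\eta<\eta_1$ the nonlinear image $f^{\pm N}(B_{V_y^*}(z,\eta))$ is contained in an arbitrarily small (say $\eta/100$) neighborhood of its linearization $X\bigl(B_{T_zV_y^*}(0,\eta)\bigr)$. Rescaling by $\eta$, the stated $(1/2)$-ball linear estimate produces a cover of $X(B_{T_zV_y^*}(0,\eta))$ by at most $C_2\|X^\wedge\|^+$ balls of radius $\eta/2$, which, after an imperceptible inflation absorbed in $C_2$, also covers the nonlinear image and yields $\Gamma\bigl(f^{\pm N}(B_{V_y^*}(z,\eta)),V_{f^{\pm N}(y)}^*,\eta/2\bigr)\le C_2\|X^\wedge\|^+$.

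It remains to upgrade $\|X^\wedge\|^+$ to $e^{2/N}\|(D_yf^{\pm N}|_{E_*})^\wedge\|^+$. One factor $e^{1/N}$ comes from almost tangency: by taking $\xi$ sufficiently small a priori, every $m$-dim subspace of $T_zV_y^*$ has $m$-volume distortion under $Df^{\pm N}$ within $e^{1/N}$ of the $m$-volume distortion of a nearby $m$-dim subspace of $E_*(z)$, so $\|X^\wedge\|\le e^{1/N}\|(D_zf^{\pm N}|_{E_*(z)})^\wedge\|^+$; the invariance of $E_*$ under $Df$ and the dominated-splitting structure rule out unexpectedly large contributions from directions transverse to $E_*$. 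The second $e^{1/N}$ is supplied by the distortion bound $\sigma(N,\rho(N))<1/N$, applied with $x_1=z$ and $x_2=y$ on the common leaf $\mathcal F_y^*(y,C_1\rho(N))$, which gives $\|(D_zf^{\pm N}|_{E_*(z)})^\wedge\|\le e^{1/N}\|(D_yf^{\pm N}|_{E_*(y)})^\wedge\|^+$. Multiplying the two factors yields the required $e^{2/N}$.

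The main obstacle is the bookkeeping in the previous step: the definition of $\sigma$ uses the exterior-algebra norm on the ambient tangent space, a maximum of Jacobians over arbitrary subspaces, whereas the right-hand side of the lemma involves the Jacobian restricted to the specific bundle $E_*$. Reconciling these norms relies on $Df$-invariance of $E_*$ together with the cone-width control from almost tangency, which forces the extremizing subspaces in the ambient exterior norm to remain uniformly close to $E_*$. Once this is in place, the rest is routine: the linear estimate for exterior norms combined with a uniform linearization of $f^{\pm N}$ on the scale $\eta_1$.
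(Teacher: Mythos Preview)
Your approach is essentially the paper's---reduce to the linear covering estimate via rescaling, then pass from $D_zf^{\pm N}|_{T_zV_y^*}$ to $D_yf^{\pm N}|_{E_*}$---but your allocation of the two $e^{1/N}$ factors differs from the paper's and creates a small gap. The paper spends one $e^{1/N}$ on the \emph{linearization} step itself: after introducing the rescaled map $F_{\pm N,\eta,z}=g_{\eta,f^{\pm N}(z)}^{-1}\circ f^{\pm N}\circ g_{\eta,z}$ and noting that $F_{\pm N,\eta,z}\to D_zf^{\pm N}|_{T_zV_y^*}$ uniformly as $\eta\to 0$, it chooses $\eta_1$ so that $\Gamma(\text{nonlinear})\le e^{1/N}\Gamma(\text{linear})$. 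The remaining $e^{1/N}$ then covers the passage from $\|(D_zf^{\pm N}|_{T_zV_y^*})^\wedge\|^+$ directly to $\|(D_yf^{\pm N}|_{E_*})^\wedge\|^+$ in one step, using only the choice $\sigma(N,\rho(N))<1/N$.

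By contrast, you spend both $e^{1/N}$ factors on the derivative comparison (one for almost tangency $T_zV_y^*\approx E_*(z)$, one for the spatial distortion $z\to y$), and then try to get the linearization step for free by ``absorbing'' it into $C_2$. That does not work as stated: $C_2$ is the fixed dimensional constant from the linear covering inequality, and the passage from balls of radius $\eta/2+\eta/100$ back down to radius $\eta/2$ costs a genuine multiplicative factor that cannot be made equal to $1$ however small you take $\eta_1$. The fix is exactly what the paper does: budget one of your $e^{1/N}$'s for the nonlinear-to-linear comparison, and collapse your two-step derivative comparison into a single $e^{1/N}$ via the definition of $\sigma$ (which, in the local trivialization, already compares $(D_{x_1}f^{\pm N})^{\wedge_k}$ at points $x_1,x_2$ on the same fake leaf, so your worry about ``ambient versus restricted'' exterior norms is handled by that choice of chart rather than by an additional factor).
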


\begin{proof}
From the definition of $\rho(N)$, for $*\in \{1,\cdots, \ell\}$, $z\in B_{V_y^*}(y, C_1\rho(N))$,
$$\|(D_zf^{\pm N}\mid_{T_zV_y^*})^{\wedge}\|^+\le e^{\frac{1}{N}}\|(D_yf^{\pm N}\mid_{E_{i}})^{\wedge}\|^+.$$
For $\eta>0$,  define $g_{\eta, z}(p)=\eta p+z$, $z\in V_y^*$. Let $F_{\pm N, \eta, z}(p)=g_{\eta, f^{\pm N}(z)}^{-1}\circ f^{\pm N} \circ g_{\eta, z}(p)$. Then 
\begin{eqnarray*}
\|F_{\pm N, \eta, z}(p)-D_zf^{\pm N}\mid_{T_zV_y^*}(p)\|\,\,\text{converges to}\,\, 0, \quad \text{as}\,\,\eta\to 0,
\end{eqnarray*}
uniformly for $p\in B_{\mathbb R^{\dim E_*}}(0, 1)$, $z\in  B_{V_y^*}(y, C_1\rho(N))$. 
Observe that $$\Gamma(f^{\pm N}(B_{V_y^*}(z,\eta)),  V_{f^{\pm N}(y)}^*,  \eta/2)=\Gamma(F_{\pm N, \eta, y}(B_{\mathbb R^{\dim E_*}}(y,1)),  \mathbb R^{\dim E_*},  1/2).$$ So, there exists $\eta_1>0$ uniformly such that for any $\eta\in (0,\eta_1)$,  
\begin{eqnarray*}
&&\Gamma(f^{\pm N}(B_{ V_{y}^*}(z,\eta)), V_{f^{\pm N}(y)}^*,  \eta/2)\\[2mm]
&\le& e^{\frac{1}{N}}\Gamma((D_zf^{\pm N}\mid_{T_zV_y^*}(B_{\mathbb R^{\dim E_*}}(0,1)), \mathbb R^{\dim E_*}, 1/2)\\[2mm]
&\le& C_2\, e^{\frac{1}{N}} \|(D_zf^{\pm N}\mid_{T_zV_y^*})^{\wedge}\|^+\\[2mm]
&\le& C_2\, e^{\frac{2}{N}} \|(D_yf^{\pm N}\mid_{E_*})^{\wedge}\|^+.
\end{eqnarray*}

\end{proof}

Let $N\ge N_0$, $\mu\in \mathcal M_{inv}(f, \Lambda)$, then for $\mu$-a.e.,  $x$,  there exists  $i$  such that $B_{\infty}(f,x, C_1\rho(N)) \subset V_{x}^{ i}(x, C_1\rho(N))$. 
 For $\eta\in (0,\eta_1)$, let  $\{y_1,\cdots, y_{k(0)}\}$ be a finite $\eta$-net of  $ B_{V_x^i, n}(f^{N}, x, C_1\rho(N))$. Let $R_{j_0}=B_{V_x^i}(y_{j_0},\eta)\cap  B_{V_x^i, n}(f^{N}, x, C_1\rho(N)) $, $1\le j_0\le k(0)$. By induction, for $0\le s\le n-2$, suppose  $$y_{j_0,\cdots, y_{j_s}},\quad R_{j_0,\cdots, j_s}: \quad 1\le j_0\le k(0),\quad 1\le j_t\le k(0,j_0,\cdots, j_{t-1}),\,\,1\le t\le s,$$ have been defined.   Given  $y_{j_0,\cdots, y_{j_s}}$,  using Lemma \ref{scaling}, one may take a set $D$ which  is an $\eta/2$-net of $f^N(B_{V_{f^{sN}(x)}^i}(y_{j_0,\cdots, y_{j_s}},\eta))$ and has cardinality not more than  $C_2e^{\frac{2}{N}} \|(D_{f^{sN}(x)}f^{N}\mid_{E_{i}})^{\wedge}\|^+$. Observe that from the $\eta/2$-net $D$,  we can choose a set  $$\{y_{j_0,\cdots, j_s, j_{s+1}}: 1\le j_{s+1}\le k(0,j_0,\cdots, j_{s})\}$$ with $k(0,j_0,\cdots, j_{s})\le \sharp D$, which forms an $\eta$-net of $f^N(B_{V_{f^{sN}(x)}^i}(y_{j_0,\cdots, y_{j_s}},\eta)) \cap f^{N}(R_{j_0,\cdots, y_{j_s}}) $.  For $1\le j_{s+1}\le k(0,j_0,\cdots, j_{s})$,  denote $$R_{j_0,\cdots, j_s, j_{s+1}}=B_{V_{f^{(s+1)N}(x)}^i}(y_{j_0,\cdots, j_s, j_{s+1}}, \eta)\cap f^{N}(R_{j_0,\cdots, j_s}). $$
 In this way we could define all situations for $0\le s\le n-1$. 
 
For $ 1\le j_0\le k(0)$, $1\le j_t\le k(0,j_0,\cdots, j_{t-1}),$ $1\le t\le n-1$,  define $$U_{j_0,\cdots, j_{n-1}}=\{y\in B_{V_x^i, n}(f^{N}, x, C_1\rho(N)): f^{tN}(y)\in R_{j_0,\cdots, j_t},\quad 0\le t\le n-1 \}.$$
then 
$$\bigcup_{j_0,\cdots, j_{n-1}} U_{j_0,\cdots, j_{n-1}}= B_{V_x^i, n}(f^{N}, x, C_1\rho(N)).$$
Note that for any $y, z\in U_{j_0,\cdots, j_{n-1}}$, $0\le t\le n-1$, 
\begin{eqnarray*}
&&d_{V_{f^{tN}(x)}^i}(f^{tN}(y), f^{tN}(z))\\[2mm]
&\le&d_{V_{f^{tN}(x)}^i}(f^{tN}(y), y_{j_0,\cdots, j_t})+d_{V_{f^{tN}(x)}^i}(f^{tN}(z), y_{j_0,\cdots, j_t})\le 2\eta. 
\end{eqnarray*} 
 Therefore, 
\begin{eqnarray*}
&& r_n(f^{N}, B_{V_x^i, n}(f^{ N}, x, C_1\rho(N)), 2\eta) \\[2mm]
 &\le&  \sum k(0,j_0,\cdots, j_{n-2}) \le  k(0)\cdot \Pi_{t=0}^{n-2}(C_2e^{\frac{2}{N}}\|(D_{f^{tN}(x)}f^{ N}\mid_{E_{i}})^{\wedge}\|^+),
\end{eqnarray*}
which implies
\begin{eqnarray*}
	&&\limsup_{n\rightarrow\infty}\frac{1}{n}\log r_n(f, B_{V_x^i, n}(f, x, C_1\rho(N)), 2\eta)\\[2mm] & \le&  \limsup_{n\rightarrow\infty}\frac{1}{nN}\log r_n(f^N, B_{V_x^i, n}(f^N, x, C_1\rho(N)), 2\eta )\\[2mm]
	&\le& \limsup_{n\rightarrow\infty}\frac{1}{nN}\log( \Pi_{t=0}^{n-2}(C_2e^{\frac{2}{N}}\|(D_{f^{tN}(x)}f^{ N}\mid_{E_{i}})^{\wedge}\|^+))\\[2mm]
	&\le& \frac{2+\log C_2}{N}+ \Delta_f^+( x, E_i, N).
\end{eqnarray*}
By the arbitrariness of $\eta$, we  obtain 
\begin{eqnarray*}
h^*(f,x, \rho(N))&\le& \lim_{\eta\to0} \limsup_{n\rightarrow\infty}\frac{1}{n}\log r_n(f, B_{V_x^i, n}(f, x, C_1\rho(N)), 2\eta)\\[2mm]
&\le&  \frac{2+\log C_2}{N}+ \Delta_f^+( x, E_i, N).
\end{eqnarray*}
Similarly, considering the inverse $f^{-1}$, we get 
\begin{eqnarray*}
	h^*(f^{-1},x, \rho(N))&\le& \lim_{\eta\to0} \limsup_{n\rightarrow\infty}\frac{1}{n}\log r_n(f^{-1}, B_{V_x^i, n}(f^{-1}, x, C_1\rho(N)), 2\eta)\\[2mm]
	&\le&  \frac{2+\log C_2}{N}+ \Delta_f^-(x, E_i, N).
\end{eqnarray*}

\section{Measure theoretic tail entropy and upper semi-continuity}

In this section, we first analyze the relationship between the scale of measure theoretic tail entropy and the evolution time, and hence give the proof of Theorem \ref{uppersemi}. 

\begin{proof}[Proof of Theorem \ref{uppersemi}] For $N\ge N_0$,  let $\vep_N=\rho(N)$. If $\mu\in \mathcal M_{erg}(f, \Lambda)$,  then $h^*(f^{\pm},x, \vep_N)$  are constants   for $\mu$-a.e., $x$, which we denote by $h^*(f^{\pm1}, \mu, \vep_N)$. 
	By Proposition 2.7 of \cite{CY}, one further obtains  $$h^*(f, \mu, \vep_N)=h^*(f^{-1},  \mu, \vep_N).$$
	Hence,  
	\begin{eqnarray*}
		h^*(f,\mu, \vep_N)&\le &   \frac{2+\log C_2}{N}+ \min\{ \Delta_f^{\pm}(\mu, E_i;  N): 1\le i\le \ell\} \\[2mm]
		&=&\frac{2+\log C_2}{N}+  \Delta_f(\mu, N).
	\end{eqnarray*}	
	When $\mu\in\mathcal M_{inv}(f, \Lambda) $, 
	using ergodic decomposition $\mu=\int_{\mathcal M_{erg}(f, \Lambda)}d\tau(m)$, 	we deduce 
	\begin{eqnarray*}
		h^*(f, \mu, \vep_N) 
		&\le&  \frac{2+\log C_2}{N}+\int_{\mathcal M_{erg}(f, M)} \Delta_f(m,  N)d\tau(m)\\[2mm]
		&=& \frac{2+\log C_2}{N}+\ \Delta_f(\mu, N),
	\end{eqnarray*}	
	which gives rise to  
	$$ \sup_{\mu\in \mathcal{M}_{inv}(f, \Lambda)} \Big{(}h^*(f,\mu, \vep_N)-\Delta_f(\mu, N)\Big{)} \leq \frac{2+\log C_2}{N} \to  0,\quad \text{as}\,\,N\to +\infty.$$ 
	In particular, since $\Delta_f(\mu, N) \to \Delta_f(\mu) $ as $N\to +\infty$, it holds that 
	$$\lim_{\vep\to 0} h^*(f,\mu, \vep) \leq \Delta_f(\mu)$$
	for any $\mu\in \mathcal{M}_{inv}(f, \Lambda)$.
\end{proof}

Next we are going to prove Theorem \ref{tail of metric entropy}.
\begin{proof}[Proof of Theorem \ref{tail of metric entropy}] By Jacobs theorem (see Theorem 8.4 of \cite{Walters}), it suffices to consider $\mu$ to be ergodic. Moreover,  by Proposition 2.1 of \cite{LSW}(note that the finity of topological entropy is used in the proof there), it is in fact enough to  prove that for $\mu$-a.e., $x$, 	
\begin{eqnarray*}
&&\lim_{\delta\to 0}\limsup_{n\rightarrow0}\frac{1}{n}\log r_n(f, B_{n}(f, x, \rho), \delta)\\[2mm]
&\le& \lim_{\delta\to 0}\limsup_{n\rightarrow0}\frac{1}{n}\log r_n(f, B_{\infty}(f, x,\rho), \delta)= h^*(f,\mu,\rho).
\end{eqnarray*}	
Note that, given $\gamma>0$,  for $\mu$-a.e., $x$, there exist $L(x)\in \mathbb{N}$ and a finite subset $D_{L(x)}(x)\subset B_{\infty}(f, x, \rho) $ with $\cup_{y\in D_{L(x)}(x)} B_{L(x)}(f, y,\delta)\supset B_{\infty}(f, x, \rho) $ satisfying 
$$\sharp D_{L(x)}(x)=r_{L(x)}(f, B_{\infty}(f, x, \rho), \delta)\le e^{L(x)(h^*(f,\mu,\rho)+\gamma)}.$$
Furthermore,  one may choose $T(x)\in \mathbb{N}$ such that 
$$ \bigcup_{y\in D_{L(x)}(x)} B_{L(x)}(f, y,\delta)\supset B_{T(x)}(f, x, \rho), $$ 
which implies 
$$r_{L(x)}(f, B_{T(x)}(f, x, \rho), \delta)\le \sharp D_{L(x)}(x) \le  e^{L(x)(h^*(f,\mu,\rho)+\gamma)}. $$
For any $j\in \mathbb{N}$, denote  
$Y_j=\{x: L(x)\le j,\, T(x)\le j\}$, then $\mu(Y_j)\to 1$ as $j\to +\infty$. For $\mu$-a.e., $x$,  by the ergodicity of $\mu$,  for large $n$,  one has 
$$\frac{\sharp  \{0\le k< n: f^k(x)\notin Y_j\}}{n} \le  1-\mu(Y_j)+\frac{1}{j}.$$
We define a sequence $0=n_0<n_1<\cdots<n_{k-1}<n_k=n$ of integers by induction. Suppose $n_s$ is defined, then  \begin{equation*} \begin{cases} 
n_{s+1}=n_s+L(f^{n_s}(x)), \text{ if}\, f^{n_s}(x)\in Y_j\,\text{and}\, n_s+j\le n;\\  
n_{s+1}=\min\{t>n_s: f^t(x)\in Y_j\}, \text{if}\,f^{n_s}(x)\notin R_j\text{ and}\, \min\{t>n_s: f^t(x)\in Y_j\}\le n;\\  n_{s+1}=n,\,\,\text{ otherwise}. \end{cases} \end{equation*}  
Since the elements of $\{x, f(x),\cdots, f^{n-1}(x)\}$ outside $Y_j$ don't exceed $n(1-\mu(Y_j)+\frac{1}{j})$, by  Lemma 2.1 of \cite{Bowen}, 
 \begin{eqnarray*}r_n(B_n(f, x, \rho), 2\delta)
 \le    e^{n(h^*(f,\mu,\rho)+\gamma)}\cdot r_1(f, M,\delta)^{n(1-\mu(Y_j)+\frac{1}{j})+j},
\end{eqnarray*}
which implies
\begin{eqnarray*}
&&\limsup_{n\rightarrow +\infty}\frac{1}{n}\log r_{n}(f, B_{n}(f, x, \rho), 2\delta)\\[2mm]
 &\le&  h^*(f,\mu,\rho)+\gamma+(1-\mu(Y_j)+\frac{1}{j})\log r_1(f, M,\delta).
\end{eqnarray*}
Since $j$ and $\gamma$ are arbitrary, it follows that 
\begin{eqnarray*}
	\limsup_{n\rightarrow +\infty}\frac{1}{n}\log r_{n}(f, B_{n}(f, x, \rho), 2\delta)\le h^*(f,\mu,\rho).
\end{eqnarray*}
Letting $\delta\to0$,  we finish the proof of Theorem  \ref{tail of metric entropy}. 
\end{proof}

Now together with the uniform arguments in Theorem \ref{uppersemi} and Theorem \ref{tail of metric entropy}, we are in a position to prove  Corollary \ref{uppersemicon}.

\begin{proof}[Proof of Corollary \ref{uppersemicon}] If $\Delta_f(\mu)=0$, then given $\delta>0$, for  large $N\in \mathbb{N}$ one has  $\Delta_f(\mu, N)\le \delta$. Besides, by  Theorem \ref{uppersemi},   taking $N$ sufficiently large in advance,  it holds that $$h^*(f,\nu, \vep_N)\le \Delta_f(\nu, N)+\delta$$
for any $\nu\in \mathcal M_{inv}(f, \Lambda)$.	
	 Note that  $\Delta_f(\nu, N)$  is continuous relative to  $\nu\in \mathcal M_{inv}(f, \Lambda)$, so for $\nu$ close to $\mu$,   we have $\Delta_f(\nu, N)\le 2\delta$  and hence
 $$h^*(f,\nu, \vep_N)\le 3\delta.$$
 Let $\mathcal P$ be a finite measurable partition with  $\mu(\partial(\mathcal P))=0$ and  $diam(\mathcal P) \leq \vep_N.$ By Theorem  \ref{tail of metric entropy}, 
\begin{eqnarray*}
h_{\nu}(f)-h_{\nu}(f, \mathcal P)\leq h^*(f,\nu, \vep_N)\le 3\delta.
\end{eqnarray*}
Moreover, for the fixed $\mathcal P$, $h_{\nu}(f, \mathcal P)$ is upper semi-continuous  at $\mu$, which implies  
\begin{eqnarray*}
h_{\nu}(f, \mathcal P)\le h_{\mu}(f, \mathcal P)+\delta,
\end{eqnarray*}
when $\nu$ close to $\mu$.  Therefore, 
\begin{eqnarray*}
	h_{\nu}(f)\le h_{\nu}(f, \mathcal P)+3\delta\le h_{\mu}(f, \mathcal P)+4\delta,
\end{eqnarray*}
which  consequently, combining with the arbitrariness of $\delta$, gives the upper semi-continuity of metric entropy at $\mu$ in  $ \mathcal M_{inv}(f, \Lambda)$.  The proof of Corollary \ref{uppersemicon} is completed. 	
\end{proof}

\end{document}